\newtheorem{mydef}{Definition}
\newtheorem{mythm}{Theorem}
\newtheorem{mylem}{Lemma}
\newtheorem{myass}{Assumption}
\newtheorem{cor}{Corollary}
\newcommand{\R}{\mathbb{R}}
\newcommand{\N}{\mathbb{N}}
\newcommand{\norm}[1]{\lVert{#1}\rVert}
\title{\LARGE \bf
Estimating the Region of Attraction Using Polynomial Optimization:  a
Converse Lyapunov Result
}
\author{Hesameddin Mohammadi, Matthew M. Peet
}
\begin{document}

\maketitle
\thispagestyle{empty}
\pagestyle{empty}
\begin{abstract}
In this paper, we propose an iterative method for using SOS programming to estimate the region of attraction of a polynomial vector field, the conjectured convergence of which necessitates the existence of polynomial Lyapunov functions whose sublevel sets approximate the true region of attraction arbitrarily well. The main technical result of the paper is the proof of existence of such a Lyapunov function. Specifically, we use the Hausdorff distance metric to analyze convergence and in the main theorem demonstrate that the existence of an $n$-times continuously differentiable maximal Lyapunov function implies that for any $\epsilon>0$, there exists a polynomial Lyapunov function and associated sub-level set which together prove stability of a set which is within $\epsilon$ Hausdorff distance of the true region of attraction. The proposed iterative method and probably convergence is illustrated with a numerical example.
\end{abstract}

\section{Introduction}
In this paper we consider the problem of estimating the region of attraction of systems of nonlinear Ordinary Differential Equations (ODE) of the form
\begin{align}
\label{eq: ODE introduction}
\dot{x}=f(x),\quad\,x(0) = x_0,
\end{align}
where $ f:\mathbb{R}^n\rightarrow\mathbb{R}^n $ is the vector field and $ x_0\in\mathbb{R}^n $ is the initial condition. If we define $ g(x,t) $ as the associated solution map and suppose $f(0)=0$, then the Region of Attraction (ROA) is defined as
\begin{align}
\label{eq: ROA introduction }
S_f:=\{x\in\mathbb{R}^n: \underset{t\rightarrow\infty}{\lim}g(x,t) = 0  \}.
\end{align}
Accurate estimates of the ROA are necessary for such problems as, e.g. flight control verification and validation~\cite{Chakraborty2011335} and determining the range of concentration over which a biological system takes on a certain set of steady state concentrations corresponding to a preferred phenotype~\cite{6377982}.

Existing approaches to approximating the ROA can be divided into  Lyapunov and non-Lyapunov based categories. Among the non-Lyapunov based approaches, we find the use of occupation measures to outer-approximate the ROA of polynomial vector fields as in \cite{6606873}. Unfortunately however,  these outer-approximations are not themselves stable and furthermore the method is restricted to certain classes of vector field. Some non-Lyapunov based approaches exist which, although not categorized as Lyapunov, still make use of Lyapunov based arguments; examples include a trajectory reversing method by backward integration of the vector field for a number of stable initial conditions as in~\cite{doi:10.1080/00207179508921938} and advecting a stable sub-level set of a polynomial backward in time as introduced in~\cite{6517253}. However, both these approaches require an initial stable set which is typically obtained using Lyapunov based methods.

By constrast, almost all Lyapunov-based methods for estimating the ROA are based on the search for a Lyapunov function $ V(x) $ and for a positive scaler $ b $ such that \vspace{0.00 in}$ \dot{V} (x)$ is negative over the sub-level set $ C:=\{x:V(x)\le b\} $~\cite{Hahn19671}. Given such $V$ and $b$, it can be shown that the connected component of $C$ containing the equilibrium is an inner-approximation to the ROA. Neglecting accurate estimates of the ROA momentarily, if we are interested in establishing the existence of a Lyapunov function which is decreasing over some bounded set, then the problem is convex and for a polynomial vector field, there are a number of recent results which use convex optimization-based approaches to search for a polynomial Lyapunov function. See, e.g. ~\cite{912314} and~\cite{1184414} or the Sum-of-Squares based open source toolboxes for constructing polynomial Lyapunov functions in SOSTOOLS~\cite{sostools} and Yalmip~\cite{1393890}. Alternatives to the SOS approach can be found in~\cite{Kamyar20152383}.

If we return to the problem of estimating the region of attraction, however, then the problem of searching for a polynomial LF with maximal sublevel sets is a bilinear Sum of Squares (SOS) program as shown in. e.g.~\cite{4471858} and~\cite{Topcu20082669}. To deal with this bilinearity, researchers have turned to Genetic Algorithms and fuzzy modeling, examples of which can be found in~\cite{6031233} and~\cite{6878443}, respectively. Extensions to nonlinear systems with uncertainties can also be found in~\cite{Zec2010445} and~\cite{5337881}. One approach to overcoming this bilinearity, as proposed in Section~\ref{sec: Numerical Example}, is to increase the diameter of the region on which the Lyapunov function is decreasing. As the region approaches the true ROA, the problem approaches infeasibility. We have conjectured that this asymptotic infeasibility is then helpful, in that it implicitly constrains the polynomial Lyapunov function to approximate a maximal Lyapunov function and hence approximate its sublevel sets and hence provide asymptotically accurate estimates of the region of attraction. However, this conjecture is purely speculative and, in fact, is based on the assumption that polynomial Lyapunov function can estimate the domain of attraction arbitrarily well. In this paper, we examine the assumption that polynomial Lyapunov functions can estimate the domain of attraction arbitrarily well and show that, in fact, polynomial Lyapunov functions can estimate the ROA as well as continuously differentiable maximal Lyapunov functions. 

Results on the existence of Lyapunov functions establishing stability of ODEs and estimates of the ROA are classified as converse Lyapunov theorems~\cite{Hahn19672}. Among the class of converse Lyapunov results, there are two sub-types we will use in this paper and are discussed in Section~\ref{sec :Problem Statement}. The first , Massera-type~\cite{Hahn19672} establishes existence of smooth Lyapunov functions on bounded subsets of the ROA with quantitative upper and lower bounds. The second, maximal-type, establish Lyapunov functions which approach infinity at the boundary of the ROA and whose level sets form asymptotically accurate inner-approximations to the ROA~\cite{VANNELLI198569}. However, despite extensive literature on converse Lyapunov theory, there are very few results on the existence of polynomial Lyapunov functions and in particular, it has never been shown that for any desired accuracy $ \epsilon>0 $, there exists a polynomial LF for which a sub-level set is within the Hausdorff $ \epsilon $-distance of the ROA. Furthermore, there is reason to doubt the existence of such polynomial Lyapunov functions. For instance, in~\cite{6161499} we find an example of a globally asymptotically stable polynomial system for which there exists no polynomial LF proving the global stability of the system. These counter examples motivate us to investigate the conditions under which the sub-level sets of polynomial LF can accurately approximate the ROA of nonlinear systems.
%
%
%
%
%
%
%
%

The goal of this paper, then, is to resolve the problem of whether the sub-level sets of polynomial Lyapunov functions  can provide arbitrarily accurate inner-approximations to the ROA. For this purpose, we will combine the approximation results on the existence of polynomial LF from~\cite{4908942} and the idea of maximal LF introduced in~\cite{VANNELLI198569}  to propose sufficient conditions that guarantee the sub-level sets of polynomial LF provide inner-approximations to the ROA arbitrarily well in the sense of Hausdorff distance.
The first of these results, as shown in~\cite{4908942}, is that  polynomials can approximate sufficiently smooth functions with a point-wise weight on the error given by $ 1/x^Tx $ in the Sobolov space and as an important implication, for any  sufficiently smooth LF proving exponential decay on a bounded region, there exists a polynomial LF on the same region which proves the same exponential decay over a neighborhood $ U$ of the equilibrium.
The second result, as stated in~\cite{VANNELLI198569}, is that under mild conditions on the vector field, there exists a maximal LF $ V:S\rightarrow R $ satisfying
 \begin{align*}
 \underset{x\rightarrow y}{\lim} V(x) = \infty,\quad\forall y\in\partial S_f,
 \end{align*}
 where $ \partial S $ denotes the boundary of the region of attraction $ S_f $ given in Eq.~\eqref{eq: ROA introduction }.
 
The main result of this paper, then, and as stated in Theorem~\ref{thm: main}, is that if $ S_f $  is bounded and there exists a sufficiently smooth maximal LF $ V(x) $, which is defined over $ S_f $ and proves exponential decay over each of the sub-level sets
\[
\{x:V(x)\le\alpha\},\;\;\alpha\in[0,+\infty),
\]
 then for any desired accuracy $ \epsilon>0 $, there exists a polynomial LF $ P_\epsilon(x) $ with a sub-level set $ \{x:P_\epsilon(x)\le a\} $ for which the connected component containing the equilibrium, denoted by $ D_\epsilon $ satisfies
\begin{enumerate}
\item $ H(D_\epsilon,cl(S_f)\le\epsilon $,
\item $ P_\epsilon $ proves exponential decay over $ D_\epsilon $,
\end{enumerate}
where $ cl(S_f) $ is the closure of $ S_f $ and  $ H(A,B) $ denotes the Hausdorff distance between the sets $ A $ and $ B $.

The paper is organized as follows. Notation is introduced in Sec.~\ref{sec: notaiton}. A few basic definitions and preliminary lemmas are presented in Sec.~\ref{sec: Preliminaries}. The mathematical formulation of the problem and corresponding theorems and assumptions are stated in Sec.~\ref{sec :Problem Statement}. The main result of the paper is presented and proved in Sec.~\ref{sec: The Main Result}.
We give our proposed method for estimating the ROA and apply it to a numerical example in Sec.~\ref{sec: Numerical Example}. Finally, we conclude in Sec.~\ref{sec: Conclusion}.

\section{Notation}
\label{sec: notaiton}
The set of n-tuples of nonnegative natural numbers is denoted by $ \mathbb{N}^n $.  Let $ \R^+ $ and $ \R^- $ be the sets of positive and negative real numbers, respectively.
We denote the closed ball of radius $r\in\R^+$ centered at $c\in \R^n$ as $ B_{r}(c):=\{x\in\mathbb{R}^n\; :\;  \norm{x-c}_2\le r\}$ with the unit ball centered at the origin $B:=B_1(0)$.  For any subset, $D$, of a normed space, we use $D^{\mathrm{o}} $ to denote the interior of $D$ and $ \partial D $ to denote the boundary of $ D $ and $cl(D):= D\cup \partial D$ to denote the closure of $D$.

For operators $ g_i:X\rightarrow X $, we denote the composition  $ \Pi_i\; g_i:= g_1\; \mathrm{o}\dots\mathrm{o}\;g_m $. For any suitably differentiable function $f:\mathbb{R}^n\rightarrow\mathbb{R}^n $ and $ \alpha \in \mathbb{N}^n $, we adopt the multi-index differential notation
\[
D^\alpha f(x):= \frac{\partial^\alpha}{\partial x ^\alpha} f(x) = \prod_{i=1}^{n} \frac{\partial^{\alpha_i}}{\partial x_i ^{\alpha_i}} f(x),
\]
where, $ \partial^{0}/\partial x_i ^{0} f(x) := f. $ The gradient operator then is given by
\[
\nabla:= \begin{bmatrix}
	D^{(1,0,\dots,0)}\\\vdots\\ D^{(0,\dots,0,1)}
	\end{bmatrix}.
\]
For any $ k\in \mathbb{N} $ and $ \Omega\subset \mathbb{R}^n $, let  $ C_1^k(\Omega) $ be the set of all functions $  f:\Omega\rightarrow\mathbb{R}^n$ such that   $ D^\alpha f(x)$ is continuous for any $ \alpha \text{ with } ||\alpha||_1 := \sum_{j=1}^{n} \alpha_j\le k $. Let $ Z^n:=\{\alpha\in\mathbb{N}^n|\alpha_i\in\{0,1\},\; i=1,\dots,n\} $.

Finally, for any $f:D\rightarrow\mathbb{R}$ and $ a\in\mathbb{R} $, we use $ L(f,a):=\{x\in D | f(x)\le a\} $ to denote the $a-$ sub-level set of $f$.

\section{Basic Set Norms and Operations}
\label{sec: Preliminaries}

The technical contribution of this paper is to show that polynomial Lyapunov functions can be used to estimate the Region of Attraction arbitrarily well for suitably differential vector fields. This existence result requires approximation not just of the Lyapunov function and its derivatives, but the sub-level sets of the Lyapunov function as well. For this reason, we require a distance metric on sets for which we define convergence. The set distance we use is the Hausdorff metric which, for any two compact sets $ D_1,D_2\subset \mathbb{R}^n $, is defined as
\[
 H(D_1,D_2):=\max\{\zeta(D_1,D_2),\zeta (D_2,D_1)\},
 \]
where
\[
 \zeta(D_1,D_2) := \underset{x\in D_1}{\max} \;\underset{y\in D_2}{\min}||x-y|| .
 \]
Intuitively $ \zeta(D_1,D_2)$ is the farthest distance of any point in $D_1$ from the set $D_2$. The Hausdorff metric, then is the farthest distance from any point in either set to some point in the other set. Of course, if $D_1$ is a subset of $D_2$, then $\zeta(D_1,D_2) = 0 $ and in this case we would have $ H(D_1,D_2) = \zeta(D_2,D_1) $.

In the following lemma, we see that sequential subsets satisfy something akin to the triangle inequality in Hausdorff metric.
\begin{mylem}
\label{lem: Hausdorff}
Let $ X,Y $ and $ Z $ be compact subsets of $ \mathbb{R}^n $ such that $ X\,\subset\,Y\,\subset\,Z$. Then,
\[
H(X,Z)\ge \max\{H(X,Y),H(Y,Z)\}.
\]
\end{mylem}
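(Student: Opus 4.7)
The plan is to exploit the nested inclusion $X\subset Y\subset Z$ to collapse every two-sided Hausdorff distance to a single one-sided term $\zeta$, and then prove two simple monotonicity statements about $\zeta$.

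First I would observe that whenever $A\subset B$, every point of $A$ lies in $B$ and so $\min_{y\in B}\|x-y\|=0$ for all $x\in A$; consequently $\zeta(A,B)=0$ and $H(A,B)=\zeta(B,A)$. Applying this to the three pairs given by $X\subset Y\subset Z$ (noting that $X\subset Z$ by transitivity), the inequality to be proved reduces to
\[
\zeta(Z,X)\;\ge\;\max\bl\{\zeta(Y,X),\;\zeta(Z,Y)\br\}.
\]

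Next I would establish the two required inequalities separately. For $\zeta(Z,X)\ge\zeta(Y,X)$, the argument is that enlarging the outer set over which a supremum is taken cannot decrease it: since $Y\subset Z$,
\[
\zeta(Y,X)=\max_{x\in Y}\min_{y\in X}\|x-y\|\;\le\;\max_{x\in Z}\min_{y\in X}\|x-y\|=\zeta(Z,X).
\]
For $\zeta(Z,X)\ge\zeta(Z,Y)$, the argument is dual: enlarging the inner set over which a minimum is taken can only decrease it, so since $X\subset Y$, for every fixed $x\in Z$ we have $\min_{y\in Y}\|x-y\|\le\min_{y\in X}\|x-y\|$, and taking the maximum over $x\in Z$ yields $\zeta(Z,Y)\le\zeta(Z,X)$.

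I do not anticipate any real obstacle here; compactness of the sets is used only to ensure that the $\max$ and $\min$ in the definition of $\zeta$ are attained, so the proof is essentially bookkeeping with the monotonicity of $\zeta$ in each of its two arguments combined with the collapse $H(A,B)=\zeta(B,A)$ when $A\subset B$. The whole argument can be written in just a few lines.
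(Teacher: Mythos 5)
Your proof is correct and follows essentially the same route as the paper: collapse each Hausdorff distance to the one-sided quantity $\zeta$ using the nesting, then use monotonicity of the outer maximum to get $H(X,Y)\le H(X,Z)$. The only difference is that you spell out the dual monotonicity argument (min over a larger inner set can only decrease) for the second inequality, which the paper dismisses as ``similar.''
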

\begin{proof}
Since $  X\,\subset\,Y\,\subset\,Z$, we have
\begin{align*}
 H(X,Y) = \zeta(Y,X) :&= \underset{y\in Y}{\max} \;\underset{x\in X}{\min}||x-y||\\
 &\le \underset{z\in Z}{\max} \;\underset{x\in X}{\min}||x-z||\\
 &=H(X,Z).
\end{align*}
Therefore, $ H(X,Y)\le H(X,Z) $. The proof of $ H(Y,Z)\le H(X,Z) $ is similar.
\end{proof}

\section{Definitions, Assumptions, and Converse Lyapunov Theory}
\label{sec :Problem Statement}
In this paper, we consider nonlinear differential equations of the form
\begin{equation}
	\label{eq: diff}
	\dot{x}(t)=f(x(t)),\quad x(0)=x_0,\quad t\in[0,\infty),
\end{equation}
where $ f:\mathbb{R}^n\rightarrow\mathbb{R}^n $ and $ f(0) = 0 $. For simplicity, in the following we will assume that the solution map for Eq.~\eqref{eq: diff} is well defined. That is, for any $x_0 \in \R^n$, there exists a unique function $g(x,t)$ such that $\partial_t g(x,t)=f(g(x,t))$ and $g(x,0)=x_0$.

\begin{mydef}
\label{def: Ass stable}
We say a set $ U\subset\mathbb{R}^n $ is \textit{asymptotically stable} for Eq.~\eqref{eq: diff} if
\begin{itemize}
\item $U$ contains a neighborhood of the origin.
\item For any $x \in U$, $g(x,t)\in\, U$ for all $t\ge 0$ and $\lim_{t\rightarrow\infty} g(x,t) =\, 0, $
\end{itemize}
\end{mydef}

\begin{mydef}
	\label{def: exponential}
We say that $ W\subset\mathbb{R}^n $ is an \textit{exponentially stable set} for Eq.~\eqref{eq: diff} if there exist $ \mu>0,\; \delta>0$ such that for any $x \in W$,
\begin{equation*}
	||g(x,t)||\le \mu ||x||\exp({-\delta\,t}).
\end{equation*}
\end{mydef}

The Region of Attraction of Eq.~\eqref{eq: diff} is defined as follows
\begin{mydef}
\label{def: ROA}
The \textit{Region of Attraction (ROA)} of the origin for Eq.~\eqref{eq: diff} is the asymptotically stable set $S$ such that for any asymptotically stable set $ U $, $U$ is contained in $S$. That is, $S$ is the union of all asymptotically stable sets, $S= \cup_{U \text{is AS}} U $. For convenience, we will henceforth denote the ROA for $f$ as $S_f$.
\end{mydef}
Note that ROA is an open set.
%

In this paper, we assume that $S_f$ exists and is bounded. Furthermore, without loss of generality, we assume $S_f$ is contained in the unit ball - i.e. $S_f \subset B$.
\begin{myass}
	\label{ass: general RA}
	$S_f$ exists and $S_f\subset B$.
\end{myass}

\noindent \textbf{Lyapunov Theorem}

\begin{mythm}
\label{theo:Lyapunov}
Let $V$ be a continuously differentiable function, $a,\beta,\gamma,\delta>0$, and $D$ be the connected component of $ L(V,a) $. Further suppose
			\begin{align*}
			\beta||x||^2&\le V(x)\le \gamma||x||^2,\\
			\nabla V(x)^T\,f(x)&\le -\delta ||x||^2,
			\end{align*}
			for all $ x\in D$. Then $D$ is an exponentially stable set for Eq.~\eqref{eq: diff}, as  in Definition~\ref{def: exponential}.
\end{mythm}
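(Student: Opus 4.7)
The plan is to proceed by the classical Lyapunov argument, namely: (i) show that $D$ is forward-invariant under the flow of Eq.~\eqref{eq: diff}, (ii) obtain a linear differential inequality for $V$ along trajectories, (iii) integrate it to get exponential decay of $V$, and then (iv) convert this into exponential decay of the state norm using the quadratic sandwich bounds.

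First I would verify forward invariance. Pick $x\in D$ and let $\tau:=\sup\{\,t\ge 0 : g(x,s)\in D \text{ for all } s\in[0,t]\,\}$; by continuity of the flow and openness considerations, $\tau>0$. On $[0,\tau)$ the hypothesis $\nabla V(g(x,s))^T f(g(x,s))\le -\delta\|g(x,s)\|^2\le 0$ applies, so $t\mapsto V(g(x,t))$ is nonincreasing, hence $V(g(x,t))\le V(x)\le a$. Therefore $g(x,t)\in L(V,a)$, and since $t\mapsto g(x,t)$ is a continuous curve starting inside the connected component $D$ and staying in $L(V,a)$, it cannot jump to a different component, so $g(x,t)\in D$ for all $t\in[0,\tau)$. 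A standard maximality argument then forces $\tau=\infty$.

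Next, with invariance established, combine the upper bound $V(x)\le \gamma\|x\|^2$ with the derivative bound to obtain, along any trajectory in $D$,
\begin{equation*}
\frac{d}{dt} V(g(x,t)) \;=\; \nabla V(g(x,t))^T f(g(x,t)) \;\le\; -\delta\,\|g(x,t)\|^2 \;\le\; -\frac{\delta}{\gamma}\,V(g(x,t)).
\end{equation*}
Gr\"onwall's inequality (or direct integration of this scalar ODE) then yields $V(g(x,t))\le V(x)\,e^{-(\delta/\gamma)t}$. Now invoke the lower bound $\beta\|x\|^2\le V(x)$ together with the upper bound at the initial time to get
\begin{equation*}
\|g(x,t)\|^2 \;\le\; \frac{1}{\beta}V(g(x,t)) \;\le\; \frac{\gamma}{\beta}\,\|x\|^2\,e^{-(\delta/\gamma)t},
\end{equation*}
so $\|g(x,t)\|\le \mu\|x\|e^{-\delta' t}$ with $\mu=\sqrt{\gamma/\beta}$ and $\delta'=\delta/(2\gamma)$, matching Definition~\ref{def: exponential}. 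Together with the fact that $V(0)=0<a$ (forced by the quadratic lower bound) implies $0\in D^{\mathrm o}$, so $D$ contains a neighborhood of the origin, the conclusion follows.

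The only step requiring genuine care is the invariance argument, specifically ruling out the trajectory escaping $D$ by passing through another connected component of $L(V,a)$. The connectedness of the image of the continuous map $t\mapsto g(x,t)$ on a connected interval, combined with monotonicity of $V$ along the trajectory, handles this cleanly; the remaining steps are essentially an ODE comparison calculation.
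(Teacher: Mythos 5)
Your proof is correct and is the standard argument; the paper itself states Theorem~\ref{theo:Lyapunov} without proof (it is the classical Lyapunov exponential-stability theorem), so there is no authorial proof to diverge from. The chain forward-invariance $\Rightarrow$ $\dot V \le -(\delta/\gamma)V$ $\Rightarrow$ Gr\"onwall $\Rightarrow$ $\|g(x,t)\|\le\sqrt{\gamma/\beta}\,\|x\|e^{-\delta t/(2\gamma)}$ is exactly what is needed to meet Definition~\ref{def: exponential}, and you correctly identify the invariance step (ruling out escape through another component of $L(V,a)$, using connectedness of the trajectory and monotonicity of $V$) as the only delicate point; the one small imprecision is attributing $\tau>0$ to ``openness,'' since $D$ is closed --- for $x$ with $V(x)=a$ one instead uses that $\nabla V(x)^Tf(x)<0$ strictly (as $x\neq 0$ there), so $V$ decreases immediately along the trajectory.
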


\noindent \textbf{Converse Lyapunov Results} There are two converse Lyapunov results which are of particular interest to this paper.

The first, Massera-type, result states that if $D^\alpha f$ is continuous and $f$ is exponentially stable on $D$, then there exists a Lyapunov function $V$ such that $D^\alpha V$ is continuous. Furthermore, there exists $\beta, \gamma, \delta >0$ such that
\[
\beta \norm{x}^2 \le V(x)\le \gamma \norm{x}^2,\qquad \nabla V(x)^T f(x)\le \gamma \norm{x}^2
\]

\begin{mythm}
	Consider ODE ~\eqref{eq: diff} and suppose $ f $ is $ k- $times continuously differentiable, for some $ k\in\N. $ Suppose there exists constants $ \lambda,\;\mu,\;\delta,\;r>0 $ such that
	\begin{align}
		||g(x,t)||&\le\mu||x||\exp(\delta t),\forall t\ge0,\quad\forall x\in B_r(0),\\
		||\nabla f||&\le\lambda, \quad\forall x\in B_{\mu r}(0).
	\end{align}
	Then there exist a $ k- $times continuously differentiable function $ W:\R^n\rightarrow \R $ and constants $ \alpha,\beta,\gamma,\mu>0 $ such that
	\begin{align}
		\alpha||x||^2\le W(x)\le\beta||x||^2,\quad\forall x\in B_r(0),\\
		\nabla W(x)^T f(x)\le-\gamma||x||^2,\quad\forall x\in B_r(0).
	\end{align}
\end{mythm}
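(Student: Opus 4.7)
My plan is to follow the classical Massera construction: define the candidate Lyapunov function as an integral of $\|g(x,t)\|^2$ along trajectories and verify each claim by using the exponential-decay hypothesis (which I read as $\|g(x,t)\|\le \mu\|x\|\exp(-\delta t)$, treating the stated $\exp(\delta t)$ as a typo), the Lipschitz-type bound $\|\nabla f\|\le \lambda$, and smoothness of the flow inherited from $f\in C^k$. Specifically, I would fix a horizon $T>0$ to be chosen later and set
\begin{equation*}
W(x):=\int_0^T \|g(x,t)\|^2\,dt.
\end{equation*}
Because $S_f$ enters only through $B_r(0)$, every integrand is controlled by hypothesis. Modifying $W$ outside $B_r(0)$ (e.g.\ by a smooth cutoff) lets me take the domain to be $\R^n$ as required.

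For the upper bound I would directly plug in the exponential stability estimate to get $W(x)\le \mu^2\|x\|^2\int_0^T e^{-2\delta t}\,dt\le \frac{\mu^2}{2\delta}\|x\|^2$, giving $\beta=\mu^2/(2\delta)$. For the lower bound I would apply Gr\"onwall's inequality to the ODE $\dot g = f(g)$, together with $\|\nabla f\|\le \lambda$ on $B_{\mu r}(0)$ (which contains the trajectory thanks to the exponential-stability hypothesis), to conclude $\|g(x,t)\|\ge \|x\|\exp(-\lambda t)$. This yields $W(x)\ge \|x\|^2\,\frac{1-e^{-2\lambda T}}{2\lambda}$, so picking any $T>0$ gives a positive $\alpha$. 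For the Lie derivative I would use the semigroup property $g(g(x,s),t)=g(x,s+t)$ and change variables:
\begin{equation*}
\nabla W(x)^Tf(x)=\left.\tfrac{d}{ds}W(g(x,s))\right|_{s=0}=\left.\tfrac{d}{ds}\int_s^{s+T}\|g(x,u)\|^2\,du\right|_{s=0}=\|g(x,T)\|^2-\|x\|^2.
\end{equation*}
Using exponential stability on the first term gives $\nabla W(x)^Tf(x)\le(\mu^2e^{-2\delta T}-1)\|x\|^2$, so choosing $T$ large enough that $\mu^2 e^{-2\delta T}<1$ produces the required $\gamma>0$.

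The part I expect to be the main obstacle is verifying that $W\in C^k$. Classical results on smooth dependence of ODE solutions on initial conditions show that if $f\in C^k$ then $x\mapsto g(x,t)$ is $C^k$ with partial derivatives $D^\alpha_x g(x,t)$ solving the associated variational equations; these derivatives satisfy exponential-in-$t$ bounds (again via Gr\"onwall with constant $\lambda$). Since the horizon $T$ is finite, these bounds are uniform in $t\in[0,T]$, which legitimizes differentiation under the integral sign and yields
\begin{equation*}
D^\alpha W(x)=\int_0^T D^\alpha_x\bigl(\|g(x,t)\|^2\bigr)\,dt
\end{equation*}
with continuous dependence on $x$ for every $\alpha$ with $\|\alpha\|_1\le k$. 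Combining this regularity with the three pointwise estimates above, and then globalizing via a smooth cutoff that equals $1$ on a ball inside $B_r(0)$ and vanishes outside $B_r(0)$, completes the verification of all four displayed inequalities with the constants $\alpha,\beta,\gamma$ (and the re-used $\mu$) constructed above.
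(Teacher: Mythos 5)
The paper itself offers no proof of this theorem: it is stated as a classical Massera-type converse Lyapunov result (with the citation to Hahn) and used as motivation for Assumption~2, so there is no in-paper argument to compare against. Your proposal is the standard Massera construction and is essentially correct: reading the hypothesis as $\|g(x,t)\|\le\mu\|x\|e^{-\delta t}$ (clearly a sign typo in the statement), the finite-horizon integral $W(x)=\int_0^T\|g(x,t)\|^2\,dt$ gives the upper bound with $\beta=\mu^2/(2\delta)$, the lower bound via the reverse Gr\"onwall estimate $\|g(x,t)\|\ge\|x\|e^{-\lambda t}$ (which is legitimate because $\mu\ge1$ forces the trajectory to stay in $B_{\mu r}(0)$, where $f(0)=0$ and $\|\nabla f\|\le\lambda$ give $\|f(y)\|\le\lambda\|y\|$), and the decrease condition from the semigroup identity $\nabla W(x)^Tf(x)=\|g(x,T)\|^2-\|x\|^2\le(\mu^2e^{-2\delta T}-1)\|x\|^2$ with $T$ large. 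The $C^k$ regularity via the variational equations and differentiation under the integral over the compact horizon $[0,T]$ is also the right argument. The one detail you should repair is the globalization step: a cutoff that equals $1$ only on a ball strictly inside $B_r(0)$ and vanishes outside $B_r(0)$ destroys all three inequalities on the annulus where the cutoff is strictly between $0$ and $1$, whereas the theorem demands them on all of $B_r(0)$. Either take the cutoff to be identically $1$ on $B_r(0)$ and supported in a slightly larger ball, or simply note that under the paper's standing assumption that the solution map is globally well defined, $W$ is already defined and $C^k$ on all of $\R^n$ and no cutoff is needed.
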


The second is a maximal Lyapunov function result which says that there exists some Lyapunov function $V$ and a positive definite function $ \phi $ such  $\nabla V(x)^T f =-\phi(x) $ for all $x \in S_f$. Furthermore, the Lyapunov function is maximal in the sense that for any $y\in \partial S$, $ \underset{x\rightarrow y}{\lim}V(x)=+\infty$.

\begin{mythm}
	Consider ODE ~\eqref{eq: diff} and suppose $ f $ is Lipschitz continuous on $ S_f $. Then there exist a continuous function $ V:S_f\rightarrow\R^+\cup\{0\} $ and a positive definite function $ \phi $ such that
	\begin{align*}
		V(0) &= 0,\quad V(x)>0,\quad\forall x\in S_f\backslash\{0\},\\
		\dot{V}(x)& =-\phi(x),\quad\forall x\in S_f.\\
		\underset{x\rightarrow y}{\lim}V(x)& = \infty,\quad\forall x\in S_f,\;\forall y\in\partial_{S_f}.
	\end{align*}
\end{mythm}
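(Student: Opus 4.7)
My plan is to construct $V$ explicitly as an integral along trajectories, following the classical Massera/Vannelli--Vidyasagar recipe. First I would secure a local exponentially stable Lyapunov function $W$ on some ball $B_r\subset S_f$; this follows from Lipschitz continuity of $f$ together with $f(0)=0$ via a standard local converse Lyapunov construction (the previous Massera-type theorem in the excerpt is already strong enough to supply $W$ under a mild linearization assumption). I would then pick a continuous positive definite function $\phi:\R^n\to \R^+\cup\{0\}$ that is majorized by $c\norm{x}^2$ on $B_r$ (so that trajectory tails are integrable) while being bounded below on compact subsets of $cl(S_f)$ away from the origin.

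With $\phi$ in hand, I would set
\[
V(x) := \int_0^\infty \phi(g(x,t))\,dt.
\]
For well-definedness I would split the integral at the entry time $T(x):=\inf\{t\ge 0:\ g(x,t)\in B_{r/2}\}$, which is finite for every $x\in S_f$ because $g(x,t)\to 0$. On $[T(x),\infty)$, integrability follows from the local quadratic bound $\dot W\le-\delta\norm{x}^2$ applied to the trajectory starting at $g(x,T(x))\in B_r$; on $[0,T(x)]$ the trajectory lies in a compact subset of $S_f$ where $\phi$ is bounded. Positivity of $V$ and $V(0)=0$ are then immediate. The derivative identity $\dot V(x)=-\phi(x)$ follows from the semigroup property $g(g(x,s),t)=g(x,s+t)$: for $s>0$,
\[
V(g(x,s)) = \int_s^\infty \phi(g(x,t))\,dt = V(x) - \int_0^s \phi(g(x,t))\,dt,
\]
and differentiating at $s=0$ yields $\dot V(x)=-\phi(x)$.

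For the maximality property, I would argue by contradiction: suppose $x_n\to y\in\partial S_f$ with $V(x_n)$ bounded. Since $\phi\ge\phi_{\min}>0$ on the compact annulus $cl(S_f)\setminus B_{r/2}$, boundedness of $V(x_n)$ would force the entry times $T(x_n)$ to be uniformly bounded by some $T^*$. Continuous dependence on initial conditions (via Lipschitz $f$) would then yield $g(y,T^*)\in \overline{B}_{r/2}\subset S_f$, so $y$ itself would belong to $S_f$ by invariance, contradicting $y\in\partial S_f$. Continuity of $V$ on $S_f$ follows by a parallel argument, combining continuous dependence with dominated convergence against a uniform integrable majorant on compact subsets of $S_f$ that is again supplied by the local Lyapunov bound.

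The main obstacle I expect is the uniform quantitative control needed when choosing $\phi$: the decay of $\phi$ near the origin must be tuned to the (possibly slow) rate at which trajectories enter $B_r$, while simultaneously $\phi$ must be bounded below away from the origin so that $V(x)\ge \phi_{\min}\,T(x)$ can drive the boundary blow-up. A secondary difficulty is the boundary argument itself: trajectories starting on $\partial S_f$ need not converge, and yet the entry times $T(x_n)$ of nearby \emph{interior} points must be shown to blow up, which requires chaining continuous dependence on initial data with invariance of $S_f$ under the flow in a way that is uniform along the approaching sequence.
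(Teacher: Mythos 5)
The paper offers no proof of this statement at all: it is quoted as a known converse result from the maximal--Lyapunov-function literature (the Vannelli--Vidyasagar reference), so there is nothing in-paper to compare against, and your sketch follows the classical integral-along-trajectories recipe that the cited source is based on. Within that recipe, however, there is one genuine gap: the hypotheses give only that $f$ is Lipschitz on $S_f$ and that the origin is asymptotically stable with region of attraction $S_f$; they do \emph{not} give local exponential stability. Your integrability argument for $V(x)=\int_0^\infty\phi(g(x,t))\,dt$ on the tail $[T(x),\infty)$ rests on a local Lyapunov function $W$ with quadratic bounds and $\dot W\le-\delta\norm{x}^2$, together with $\phi(x)\le c\norm{x}^2$ near the origin, and you concede that producing $W$ needs ``a mild linearization assumption.'' That assumption is not available here. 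For $\dot x=-x^3(1-x^2)$, $f$ is Lipschitz on the bounded ROA $(-1,1)$, yet trajectories decay like $t^{-1/2}$, so $\int^\infty\norm{g(x,t)}^2\,dt=\infty$ and no $W$ with the asserted quadratic bounds exists (such a $W$ would force exponential decay). As written, the well-definedness of $V$ fails for such systems.

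The standard repair is Massera's lemma rather than a quadratic local Lyapunov function: attraction to the origin is uniform on compact subsets of $S_f$, so there is a decreasing $\rho(t)\to 0$ with $\norm{g(x,t)}\le\rho(t)$ for all $x$ in a fixed neighborhood of $0$, and Massera's lemma supplies a smooth, strictly increasing $G$ with $G(0)=0$ and $\int_0^\infty G(\rho(t))\,dt<\infty$; taking $\phi(x)=G(\norm{x})$ (adjusted to remain bounded below on compact subsets of $cl(S_f)$ away from the origin) restores integrability, and your splitting at the entry time $T(x)$ then goes through. Your boundary blow-up argument ($V(x)\ge\phi_{\min}T(x)$ plus continuous dependence and invariance to force $T(x_n)\to\infty$ as $x_n\to y\in\partial S_f$) and the continuity argument are in the right spirit, but both also currently route through the nonexistent quadratic bound for their uniform majorants; they too should be re-based on the uniform rate $\rho$ and an Arzel\`a--Ascoli or continuous-dependence limit along the trajectory segments $g(x_n,[0,T^*])$, which stay in the compact set $cl(S_f)$ where $f$ is Lipschitz.
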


A maximal Lyapunov function, $V$, has the advantage that for any desired accuracy, $\epsilon>0$, there exists a sublevel set $L(V,a) \subset S_f$ of $V$ with $H(S_f,L(V,a))\le \epsilon$ in the Haussdorf metric.

Under Assumption~\ref{ass: general RA}, from~\cite{VANNELLI198569} we can conclude that  if $ f $ is continuously differentiable, then there exists a continuously differentiable maximal Lyapunov function $ V:S_f\rightarrow\R^+ $ which decreases  over the trajectories of the system and satisfies
\[
 \underset{x\rightarrow y}{\lim}V(x)=+\infty,\quad\forall\;y\in \partial S.
\]
Furthermore $ V(x)$ proves exponential decay over each of the sub-level sets of $ V(x) $ and these sublevel sets can approximate $S_f$ as accurately as desired.

In the following assumption, we combine the maximal and quadratic results to simply assume that for any $\epsilon>0$, there exists a quadratically upper and lower bounded Lyapunov function, with quadratically upper bounded derivative and which is maximal. This slightly strong assumption can be relaxed at the cost of increasing the complexity of the proof.

\begin{myass}
	\label{ass: general LF}
	Let $ S $ be the ROA of Eq.~\eqref{eq: diff}. There exists a function $ V:S\rightarrow \mathbb{R} $ s.t.
	\begin{enumerate}
		\item  $ D^\alpha V \in C_1^2(S_f) ,\quad\forall\; \alpha \in Z^n $,
		\item $ \underset{x\rightarrow y}{\lim}V(x)=+\infty,\quad\forall\;y\in \partial S_f$,
		\item for all $ y\in S_f$:
		\begin{enumerate}
			\item $L(V,V(y))$ is compact.
			\item There exist  $ \beta_{y},\gamma_{y},\delta_{y}\in\mathbb{R}^+ $ s.t.
					\begin{align*}
						\beta_{y}||x||^2\le V(x)\le \gamma_{y}||x||^2,\quad&\forall\;x\in L(V,V(y)),\\
						\nabla V(x)^T\,f(x)\le -\delta_{y} ||x||^2,\quad&\forall\;x\in L(V,V(y)).
					\end{align*}

		\end{enumerate}
	\end{enumerate}

\end{myass}
Note that these assumptions are NOT mutually exclusive, as the upper and lower bounds only apply on a strict subset of the ROA. Furthermore, as indicated in the following lemma, these assumptions guarantee the existence of a decreasing Lyapunov function $V$ whose sub-level sets can approximate the ROA, $S_f$ for any desired level of accuracy in the Haussdorf metric.

\begin{mylem}
	\label{lem: approx for v}
	Suppose $ S_f $ is the ROA of Eq.~\eqref{eq: diff} as per Definition~\ref{def: ROA} and let $ V:S_f\rightarrow \R $ be any function satisfying the conditions in Assumptions~\ref{ass: general RA} and~\ref{ass: general LF}. For any $ \epsilon>0 $, there exists some $r>0$ such that
	\begin{align*}
	H(cl(S_f),L(V,r))\le \epsilon .
	\end{align*}
\end{mylem}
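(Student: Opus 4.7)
The plan is to prove the bound by a compactness-and-finite-subcover argument. First I would observe that, because $L(V,r)\subset S_f\subset cl(S_f)$, we automatically have $\zeta(L(V,r),cl(S_f))=0$, so $H(cl(S_f),L(V,r))=\zeta(cl(S_f),L(V,r))$. Thus it suffices to produce a single $r$ for which every point of $cl(S_f)$ lies within $\epsilon$ of some point of $L(V,r)$.

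Next I would exploit compactness. By Assumption~\ref{ass: general RA}, $S_f\subset B$, so $cl(S_f)$ is closed and bounded, hence compact in $\mathbb{R}^n$. For every $x\in cl(S_f)$ I would choose a nearby point $y_x\in S_f$ with $\|x-y_x\|<\epsilon/2$: if $x\in S_f$, set $y_x:=x$; otherwise $x\in\partial S_f$, and since the ROA is open (as noted after Definition~\ref{def: ROA}), $x$ is a limit point of $S_f$ and hence $S_f\cap B_{\epsilon/2}(x)\neq\emptyset$. The balls $\{B_{\epsilon/2}(x)\}_{x\in cl(S_f)}$ form an open cover of the compact set $cl(S_f)$, so I extract a finite subcover with centers $x_1,\dots,x_N$ and corresponding interior points $y_1,\dots,y_N\in S_f$.

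Because each $y_i\in S_f$, the maximal Lyapunov function satisfies $V(y_i)<\infty$, so I would define $r:=\max_{1\le i\le N} V(y_i)$, which is finite. Then $y_i\in L(V,r)$ for every $i$, and for an arbitrary $z\in cl(S_f)$ I pick an $i$ with $z\in B_{\epsilon/2}(x_i)$ and estimate
\[
\min_{y\in L(V,r)}\|z-y\|\le \|z-y_i\|\le \|z-x_i\|+\|x_i-y_i\|<\epsilon/2+\epsilon/2=\epsilon,
\]
which yields $\zeta(cl(S_f),L(V,r))\le\epsilon$ and completes the proof.

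The argument is essentially a packaging exercise, and the main obstacle is simply identifying the right ingredients rather than performing any subtle estimate: openness of $S_f$ (so that boundary points are approximable from within), boundedness of $S_f$ under Assumption~\ref{ass: general RA} (so that $cl(S_f)$ is compact and admits a finite subcover), and finiteness of $V$ on $S_f$ (so that the finite collection $\{V(y_i)\}$ has a common upper bound). Without the boundedness hypothesis the finite-subcover step would fail, which is exactly the reason Assumption~\ref{ass: general RA} is invoked; the blow-up property $\lim_{x\to y}V(x)=+\infty$ at $\partial S_f$ is not used directly here but is what guarantees that the produced sublevel set $L(V,r)$ stays strictly inside $S_f$ and is compact, consistent with Assumption~\ref{ass: general LF}.
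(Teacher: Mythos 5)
Your proof is correct and follows essentially the same strategy as the paper: choose a compact inner approximant of $cl(S_f)$ lying within Hausdorff distance $\epsilon$, take $r$ to be the maximum of $V$ over it (finite since $V$ is finite on $S_f$), and observe that $L(V,r)$ then contains the approximant, so the one-sided distance $\zeta(cl(S_f),L(V,r))$ is at most $\epsilon$. The only difference is presentational: the paper simply posits a compact $S_\epsilon\subset S_f$ with $H(S_\epsilon,cl(S_f))\le\epsilon$ and invokes its Lemma~\ref{lem: Hausdorff}, whereas you explicitly build the approximant as a finite $\epsilon$-net via a compactness/finite-subcover argument and carry out the triangle-inequality estimate directly, which actually fills in a detail the paper leaves implicit.
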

\vspace{0.1 in}

\begin{proof}
By definition, $ 0\in int(S_f)$. Now let $ S_\epsilon $ be any compact set such that $0 \in S_{\epsilon}$ and $S_\epsilon \subset S_f$ with $ H(S_\epsilon,cl(S))\le \epsilon $. Define
\[
x^*: = \underset{x\in S_\epsilon}{\arg \max}\;V(x),
\]
and $ r^*: = V(x^*)$. By the definition of $x^*$, $S_\epsilon \subset L(V,r^*)\subset S$. Therefore, if we let
	\begin{align*}
	X:=S_\epsilon,\;\; Y:=L(V,r^*),\;\; Z:=cl(S),
	\end{align*}
then $X \subset Y \subset Z$ and by Lemma~\ref{lem: Hausdorff},
\[
H(cl(S),L(V,r^*))=H(Z,Y)\le H(X,Z)=H(S_\epsilon,cl(S)) \le \epsilon.
\]
\end{proof}

In fact, under Assumptions~\ref{ass: general RA} and~\ref{ass: general LF}, a stronger notion of stability over the ROA can be verified. That is, let $ S_f $ be the ROA of Eq.~\eqref{eq: diff} and $ V:S\rightarrow\mathbb{R} $ be a function for which the conditions in Assumptions~\ref{ass: general RA} and~\ref{ass: general LF} are satisfied. Then Theorem~\ref{theo:Lyapunov} implies that  for any $ y\in S $, $ L(V,V(y)) $ is an exponentially stable set for Eq.~\eqref{eq: diff}. We also notice that a straightforward implication of the exponential stability of the sets $ L(V,V(y)) $ is that $ V(0)=0 $, $ V(x)>0,\;\forall x\in S\backslash\{0\} $ and the sets $ L(V,V(y)) $ are connected.

\section{The Main Result}
\label{sec: The Main Result}

We start this section by recalling the goal of the paper that is to determine  conditions that guarantee the sub-level sets of polynomial LFs can alone inner-approximate the ROA up to any desired accuracy. The main result shows that this guaranty can be provided under Assumptions~\ref{ass: general RA} and~\ref{ass: general LF}.


Before presenting the main result, we give a slight modification of a result in~\cite{4908942}, wherein it was shown that polynomial Lyapunov functions could approximate twice continuously differential Lyapunov functions with a quadratic upper bound on the error.

\begin{mylem}
\label{lem: pol aprox}
Suppose $S_f$ is the ROA of Eq.~\eqref{eq: diff} as in Definition~\ref{def: ROA} and $ V:S_f\rightarrow \R $ satisfies the conditions in Assumptions~\ref{ass: general RA} and~\ref{ass: general LF}. Then for any $ \epsilon>0 $ and $c>0 $, there exists a polynomial Lyapunov function $ P $ and positive scalers $ \beta,\gamma$ and $\delta $ s.t.
		\begin{align*}
		\beta||x||^2&\le P(x)\le \gamma||x||^2,\\
		\nabla P(x)^T\,f(x)&\le -\delta ||x||^2,\\
		|{P(x)-V(x)}|&\le \epsilon,
		\end{align*}
		for all $ x\in  L(V,c)$.
\end{mylem}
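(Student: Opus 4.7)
The plan is to combine the Sobolev-weighted polynomial approximation theorem of~\cite{4908942}, which for any $C^2$ function produces a polynomial whose pointwise error is controlled by $\|x\|^2$ and whose gradient error is controlled by $\|x\|$, with the strict quadratic bounds that Assumption~\ref{ass: general LF} supplies for $V$ and for $\nabla V^T f$ on the sublevel set $L(V,c)$. Because those assumed inequalities have a strict slack in their constants $\beta$, $\gamma$, $\delta$, a sufficiently small weighted error can be absorbed into each of them without destroying the inequality; the uniform bound $|P-V|\le\epsilon$ then comes for free since $L(V,c)$ is bounded.

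\textbf{Step 1 (extracting constants on $L(V,c)$).} Since $V$ is continuous on the connected set $S_f$ with $V(0)=0$ and $V(x)\to\infty$ as $x\to\partial S_f$, the intermediate value theorem supplies $y\in S_f$ with $V(y)=c$, so that $L(V,c)=L(V,V(y))$. Assumption~\ref{ass: general LF}(3) at that $y$ then yields positive constants $\beta$, $\gamma$, $\delta$ together with compactness of $L(V,c)$, and
\[
\beta\|x\|^2\le V(x)\le\gamma\|x\|^2,\qquad \nabla V(x)^T f(x)\le -\delta\|x\|^2,\qquad \forall x\in L(V,c).
\]
Compactness of $L(V,c)$ furnishes an $R>0$ with $L(V,c)\subset B_R(0)$, and $f\in C^1$ with $f(0)=0$ furnishes a constant $L_f>0$ such that $\|f(x)\|\le L_f\|x\|$ on $L(V,c)$.

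\textbf{Step 2 (applying the weighted approximation).} The set $L(V,c)$ is compact and contained in the open set $S_f$ on which $V$ is $C^2$, so~\cite{4908942} provides, for any prescribed $\eta>0$, a polynomial $P$ with
\[
|V(x)-P(x)|\le \eta\|x\|^2,\qquad \|\nabla V(x)-\nabla P(x)\|\le \eta\|x\|,\qquad \forall x\in L(V,c).
\]

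\textbf{Step 3 (propagating the bounds to $P$).} Combining Steps 1 and 2 through the triangle and Cauchy--Schwarz inequalities gives
\begin{align*}
(\beta-\eta)\|x\|^2 &\le P(x)\le (\gamma+\eta)\|x\|^2,\\
\nabla P(x)^T f(x) &\le -(\delta-\eta L_f)\|x\|^2,\\
|P(x)-V(x)| &\le \eta R^2,
\end{align*}
uniformly on $L(V,c)$. Picking $\eta<\min\{\beta,\,\delta/L_f,\,\epsilon/R^2\}$ and setting $(\beta_0,\gamma_0,\delta_0):=(\beta-\eta,\,\gamma+\eta,\,\delta-\eta L_f)$ completes the proof.

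\textbf{Where the difficulty lies.} The whole argument hinges on the weighted approximation in Step 2: the error and gradient error must vanish \emph{at the origin} in a controlled way, so that the quadratic bounds on $V$ survive the polynomial perturbation at exactly the point where the weight $1/\|x\|^2$ degenerates. This is the main content of~\cite{4908942}; once granted, the remainder is a bookkeeping step that trades approximation accuracy for the three Lyapunov constants. A minor but necessary subtlety is that Assumption~\ref{ass: general LF}(3b) only supplies bounds on sets of the form $L(V,V(y))$, so one must first locate a $y\in S_f$ with $V(y)=c$ in order to invoke it for the prescribed level $c$.
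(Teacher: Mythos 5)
Your proposal is correct and follows essentially the same route as the paper's own proof: extract the quadratic bounds with slack from Assumption~\ref{ass: general LF}, invoke the $1/x^Tx$-weighted polynomial approximation of~\cite{4908942} on the compact set $L(V,c)$, and absorb the weighted error into the constants. The only differences are bookkeeping (the paper bounds $\nabla(P-V)^Tf$ using $\|f\|_\infty\le b$ on $B$ together with the quadratically weighted gradient error, rather than your $\|f(x)\|\le L_f\|x\|$ pairing, and it uses $x^Tx\le 1$ in place of your $R^2$), plus your added care in locating $y$ with $V(y)=c$, which the paper glosses over.
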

See the Appendix for a Proof.

\begin{mythm}
	\label{thm: main}
	Let $ S_f $ be the ROA of Eq.~\eqref{eq: diff} as in Definition~\ref{def: ROA} and $ V:S_f\rightarrow\R $ be any function for which the conditions in Assumptions~\ref{ass: general RA} and~\ref{ass: general LF} are satisfied. For any $ \epsilon>0 $, there exists a polynomial Lyapunov Function $ P $ and a sub-level set $ L(P,a) $ such that if we define $ D $ to be the connected component of $ L(P,a) $ containing the origin, then
	\begin{enumerate}
		\item $ H(D,cl(S_f))\le\epsilon $,
		\item  There exist scalers $\beta,\gamma,\delta\in\mathbb{R}^+ $ such that
			\begin{align*}
			\beta||x||^2&\le P(x)\le \gamma||x||^2,\\
			\nabla P(x)^T\,f(x)&\le -\delta ||x||^2,
			\end{align*}
			for all $ x\in D$.
	\end{enumerate}
\end{mythm}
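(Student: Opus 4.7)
The plan is to combine Lemmas~\ref{lem: Hausdorff},~\ref{lem: approx for v}, and~\ref{lem: pol aprox} to trap the connected component $D$ between two nested sublevel sets of $V$, and then extract the Hausdorff bound from the nested-containment inequality of Lemma~\ref{lem: Hausdorff}. Concretely, given $\epsilon>0$, I first use Lemma~\ref{lem: approx for v} to pick $r>0$ with $H(cl(S_f),L(V,r))\le\epsilon$; this is the target sublevel set of $V$ that already achieves the desired Hausdorff accuracy. I then fix a working radius $c>r$ (for instance $c=r+1$) and a perturbation parameter $\eta>0$ small enough that $r+3\eta<c$. Applying Lemma~\ref{lem: pol aprox} on $L(V,c)$ with approximation accuracy $\eta$ produces a polynomial $P$ together with positive constants $\beta,\gamma,\delta$ so that the quadratic bounds and the Lyapunov inequality $\nabla P(x)^{T}f(x)\le-\delta\|x\|^{2}$ hold on all of $L(V,c)$, together with $|P(x)-V(x)|\le\eta$ there. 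Set $a:=r+\eta$ and let $D$ be the connected component of $L(P,a)$ containing the origin.

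The heart of the argument is the sandwich $L(V,r)\subset D\subset L(V,r+3\eta)$. The lower inclusion is direct: for $x\in L(V,r)\subset L(V,c)$ one has $P(x)\le V(x)+\eta\le r+\eta=a$, so $L(V,r)\subset L(P,a)$; since $L(V,r)$ is connected (by the remark closing Section~\ref{sec :Problem Statement}) and contains the origin, it lies inside the connected component $D$. The upper inclusion is the delicate topological step: on $\partial L(V,r+3\eta)$, continuity of $V$ on the open set $S_f$ combined with the blow-up of $V$ at $\partial S_f$ forces $V=r+3\eta$, so $P\ge V-\eta=r+2\eta>a$ and $\partial L(V,r+3\eta)\cap L(P,a)=\emptyset$. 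If $D$ contained any point outside $L(V,r+3\eta)$, then $D\cap L(V,r+3\eta)^{\mathrm{o}}$ and $D\cap(\R^n\setminus L(V,r+3\eta))$ would constitute a nontrivial disconnection of $D$ (both nonempty, both relatively open, together covering $D$ since the common boundary is missed), contradicting connectedness. Therefore $D\subset L(V,r+3\eta)\subset L(V,c)$.

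With the sandwich in hand, the chain $L(V,r)\subset D\subset cl(S_f)$ together with Lemma~\ref{lem: Hausdorff} yields $H(D,cl(S_f))\le H(L(V,r),cl(S_f))\le\epsilon$, proving item~(1), while the inclusion $D\subset L(V,c)$ transfers the quadratic bounds and Lyapunov inequality from Lemma~\ref{lem: pol aprox} to $D$, proving item~(2). The principal obstacle is the upper sandwich inclusion: one must rule out $D$ leaking past the level set $\{V=r+3\eta\}$ into regions where $V$ is undefined or where the polynomial approximation error is no longer controlled, and this is exactly what the strict inequality $P>a$ on $\partial L(V,r+3\eta)$, the compactness of $L(V,r+3\eta)$, and the blow-up of $V$ at $\partial S_f$ collectively enforce.
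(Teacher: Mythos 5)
Your proof is correct, but it takes a genuinely different route from the paper's. The paper constructs an auxiliary set $E:=L(V,r_2)\cap L(P,a)$ with $a$ chosen as the minimum of $P$ over an intermediate level surface $\partial L(V,r_m)$, proves $E$ is connected by showing it is flow-invariant and exponentially stable, invokes the topological intersection lemma (Lemma~\ref{lem: intersection}) to get $L(V,r_1)\subset E$, and finally identifies $E$ with the connected component $D$ via a path argument. You instead pick $a=r+\eta$ directly and sandwich $D$ between $L(V,r)$ and $L(V,r+3\eta)$ by a purely topological separation argument: the level surface $\{V=r+3\eta\}$ is a barrier on which $P\ge r+2\eta>a$, so the two disjoint open sets $int(L(V,r+3\eta))$ and $\R^n\setminus L(V,r+3\eta)$ cover the connected set $D$, forcing $D$ into the one containing the origin. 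This buys you quite a lot: you dispense with the dynamical (flow-invariance) argument for connectedness of $E$, with Lemma~\ref{lem: intersection} entirely, and with the paper's bookkeeping of the constants $r_1,r_m,r_2$ and the $(r_2-r_1)/9$ tolerance --- bookkeeping which, incidentally, contains small arithmetic slips in the paper's Part~4 (e.g.\ $r_2=r_m+2(r_m-r_1)$, not $r_m+3\lvert r_m-r_1\rvert$), slips your cleaner choice of constants avoids. The only dynamical fact you still lean on is the connectedness of the sublevel sets $L(V,\cdot)$, which the paper establishes in the remark closing Section~\ref{sec :Problem Statement}, and your observation that $\partial L(V,b)\subset\{V=b\}$ (via continuity of $V$ on the open set $S_f$ and compactness of $L(V,b)$ from Assumption~\ref{ass: general LF}) is exactly the point that makes the barrier argument rigorous. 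Both proofs deliver the same conclusion; yours is shorter and more elementary, while the paper's flow-invariance argument has the side benefit of directly exhibiting $D=E$ as an exponentially stable (hence invariant) set, which is what its subsequent corollary records.
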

This theorem states that $P$ can be used to prove stability on a set $D$ which is arbitrarily close to $S_f$ in the Haussdorf norm.
\vspace{0.1 in}

\begin{proof}
In this proof, we combine the fact that the sub-level sets of a maximal LF  can inner-approximate the ROA as in Lemma~\ref{lem: approx for v}, and the fact that any sufficiently smooth maximal Lyapunov function $ V $ can be approximated by polynomial LF  on each  of the sub-level sets $ L(V,a),\;\forall a>0 $ as in Lemma~\ref{lem: pol aprox},  to  show that	under Assumptions~\ref{ass: general RA} and~\ref{ass: general LF}, the connected components of the sub-level sets of polynomial LFs can alone inner-approximate the ROA, arbitrarily well.

Let $ V:S_f\rightarrow\mathbb{R} $ satisfy the conditions in Assumption~\ref{ass: general LF}. From Lemma~\ref{lem: approx for v}, we know that there exists a scalar $ r_1>0 $ such that
	\[
	H(L(V,r_1),cl(S_f))\le\epsilon.
	\]
	Now, let $ r_2>r_1 $. By Lemma~\ref{lem: pol aprox}, there exist scalars $ \beta,\gamma$, $\delta $ and a polynomial Lyapunov function $ P(x) $ approximating $ V(x) $ over the compact set $ L(V,r_2) $ such that
			\begin{align}
			\label{eq: temp6ProofTheo}
			|P(x)-V(x)|&\le\frac{r_2-r_1}{9} ,\quad\forall x\in  L(V,r_2),\\
			\label{eq: temp4ProofTheo}\beta||x||^2&\le P(x)\le \gamma||x||^2,\quad\forall x\in  L(V,r_2),\\
			\label{eq: temp5ProofTheo}\nabla P(x)^T\,f(x)&\le -\delta ||x||^2,\quad\forall x\in  L(V,r_2).
			\end{align}
			
			Now, suppose we can establish the existence of a sub-level set $ L(P,a)$ with connected component $ D $ where $D$ is compact and
			\begin{align}
			\label{eq: temp1ProofThoe1}
			 L(V,r_1)\subset D\subset L(V,r_2).
			\end{align}
			Then, since $ L(V,r_2)\subset S $ and $ H(L(V,r_1),cl(S))\le\epsilon $, if we let $ X:= L(V,r_2) $, $ Y:=D $ and $ Z:=cl(S) $, we have $ X\subset Y\subset Z $. Therefore, Lemma~\ref{lem: Hausdorff} implies  that $ H(D,cl(S))\le\epsilon $. Moreover, since $ D\subset L(V,r_2)$, Eq.~\eqref{eq: temp4ProofTheo} and~\eqref{eq: temp5ProofTheo} imply that 	
			\begin{align}
					\label{eq: temp2ProofTheo1}	\beta||x||^2&\le P(x)\le \gamma||x||^2,\quad\forall x\in D,\\
					\label{eq: temp3ProofTheo1}	\nabla P(x)^T\,f(x)&\le -\delta ||x||^2,\quad\forall x\in D,
			\end{align}
			as desired.	
					The remainder of the proof is dedicated to establishing the existence of such a sub-level set $ L(P,a) $ and connected component $ D $.

First, let
	 \begin{align*}
				r_m&:= \frac{r_2-r_1}{3} + r_1,\\
				\; x_m &:= \underset{x\in\partial L(V,r_m)}{\arg\min} P(x),\\
				 a&:=P(x_m),\\
				  E&:= L(V,r_2)\cap L(P,a),
		\end{align*}
		Note that $ r_1<r_m<r_2 $,  and the existence of $ x_m $ follows from the fact that $ L(V,r_m) $ is compact, as in Assumption~\ref{ass: general LF}, and $ P $ is continuous.
		
		If we define $ D $ as the connected component of $ L(P,a) $ such that $ 0\in D $, then we will show that $ E = D $ and $ D $ satisfies Eq.~\eqref{eq: temp1ProofThoe1} which completes the proof.
		In order to do so, first note that $ E\subset L(V,r_2) $ holds by definition. Next, we will in turn show that:
		\begin{enumerate}
				\item \label{proof: part2} $ E $ is connected and $ 0\in E $,
				\item\label{proof: part3} $L(V,r_1)\subset E$,
	   \item \label{proof: part4}$ E = D$.
		\end{enumerate}
							
			
			 \textbf{Proof of Part~\ref{proof: part2}}: In order to show that $ E $ is connected and $ 0\in E $, we will show that $ E $ is an exponentially stable set for Eq.~\eqref{eq: diff}, as in Definition~\ref{def: exponential}. Note that any exponentially stable set is connected and contains $ 0 $.
			 Under Assumption~\ref{ass: general LF}, $ L(V,r_2) $ is an exponentially stable set.  Now, since $ E\subset L(V,r_2) $ by definition, there exist $ \mu>0,\; \delta>0$ such that
			 \begin{align*}
			||\phi(x,t)||&\le \mu ||x||\mathrm{e}^{-\delta\,t},\quad\forall t\ge 0,
			 \end{align*}
			 \[
			 \lim_{t\rightarrow\infty} \phi(x,t) =\, 0,
			 \]
			 for any $ x\in E\subset L(V,r_2) $.
			  Therefore, in order to prove the exponential stability of $ E $, we only need to show that
				\begin{align}
				\label{eq: temp1}
				\phi(x,t)&\in E,\quad\forall x\in E,\quad \forall t\ge0.
				\end{align}

				 Again, we use the exponential stability of  $ L(V,r_2) $  and the fact that $ E\subset L(V,r_2) $ to write:
				 \begin{align}
				 \label{eq: proof2Temp1}
				 	\phi(x,t)&\in L(V,r_2),\quad\forall t\ge 0,\;\forall x\in E.
				 \end{align}
				 Now, based on Eq.~\eqref{eq: temp5ProofTheo}, we have
				\begin{align}
				\label{eq: proof2Temp3}
				\nabla P(x)^T\,f(x)<0,\;\forall x\in L(V,r_2)\backslash0.
				\end{align}

				Therefore, we can conclude from Eq.~\eqref{eq: proof2Temp1} and~\eqref{eq: proof2Temp3} that
				\begin{align*}
				& P(\phi(x,t))\\
				 &= P(x) + \int_{0}^{t}\nabla P(\phi(x,\tau))^T f(\phi(x,\tau))d\tau < P(x),
				\end{align*}
				for any $x\in L(V,r_2)\backslash0$.
				Hence, since $ E\subset L(P,a) $, we have
				 \begin{align}	
				 	P(\phi(x,t))\le P(x)\le a,\;\forall t\ge0,\; \forall x\in E.
				 \end{align}
				 Therefore,
				 \begin{align}
				 \label{eq: proof2Temp2}
				 \phi(x,t)\in L(P,a),\;\forall t\ge0,\; \forall x\in E.
				 \end{align}

				  Now, since $ E= L(V,r_2)\cap L(P,a)$, Eq.~\eqref{eq: temp1} follows from Eq.~\eqref{eq: proof2Temp1} and \eqref{eq: proof2Temp2}, as desired.

		\textbf{Proof of Part~\ref{proof: part3}:} We will use Lemma~\ref{lem: intersection} in the Appendix to show that $ 	L(V,r_1)\subset E.	$ Based on Assumption~\ref{ass: general LF} and Part~\ref{proof: part2} of the proof, the sets $L(V,r_1)$ and $ E $ both contain $ 0 $ and are compact and connected. In order to apply Lemma~\ref{lem: intersection}, we only need to show that
		
		\begin{enumerate}
		\item $L(V,r_1) \cap E\neq\emptyset,$
		
		\item $		\partial L(V,r_1) \cap \partial E = \emptyset,$
		
		\item $ 	E\not\subset int(L(V,r_1)).$
		\end{enumerate}

	 First, It is immediate that $ 0\in L(V,r_1)\cap E$ and hence
	 \[
	 E\cap L(V,r_1)) \neq \emptyset .
	 \]
	
	 Second, by contradiction, we will show that
	 $ \partial L(V,r_1) \cap \partial E = \emptyset $.
	 Suppose $ \partial L(V,r_1) \cap \partial E \neq \emptyset $. Therefore,
	
	  \[\exists\; x \in \partial L(V,r_1) \cap \partial E  .\]
	   Now, since $ E = L(V,r_2)\cap L(P,a) $, we have
	    \[
	 \partial E \subset \partial L(V,r_2) \cup \partial L(P,a).
	 \] Therefore, at least one of the following should hold:
	 \begin{align}
	  P(x) &= a \text{ and } V(x) = r_1 ,\label{eq: temp3}\hspace{0.7 in }\\
	  \text{ or }\hspace{.8 in}&\nonumber\\
	  V(x) &= r_m \text{ and } V(x) = r_1\label{eq: temp2} .\hspace{ 0.7 in}
	 \end{align}
	  Assertion~\eqref{eq: temp2} is impossible because $ r_1<r_m $. Therefore, Assertion~\eqref{eq: temp3} holds.  However, since $ \{x_m,x\}\subset L(V,r_1)\subset L(V,r_2) $, Eq.~\eqref{eq: temp6ProofTheo} implies that
	 \begin{align}
	 |P(x_m)-V(x_m)|&\le(r_2-r_1)/9,\label{eq: temp6 }\\
	 |P(x)-V(x)|&\le(r_2-r_1)/9\label{eq: temp7}.
	 \end{align}
	 Hence, since $ r_m = \frac{r_2-r_1}{3} + r_1 $, we use Assertion~\eqref{eq: temp3} and Eq.~\eqref{eq: temp7} to write
	   \begin{align}
	   \label{eq: temp8}
	   |a-r_1|&\le(r_m-r_1)/3.
	   \end{align}
	   and substitute $ P(x_m)=a $ and $ V(x_m) = r_m $ in  Assertion~\eqref{eq: temp6 } to write
	   \begin{align}
	   	|a-r_m|&\le (r_m-r_1)/3.\label{eq: temp9}
	   \end{align}
	   However, Eq.~\eqref{eq: temp8} and~\eqref{eq: temp9} by triangle inequality imply that
	   \[
	   |r_m - r_1| \le 2|r_m-r_1|/3,
	   \]
	   which is a contradiction, because $ r_1\neq r_m $.
	
	   Continuing the examination of the conditions in Lemma~\ref{lem: intersection}, finally we show that	$E\not\subset int(L(V,r_1))$.  Note that $ P(x_m)= a$ and $ V(x_m )= r_m\le r_2 $. Therefore, $x_m\in E $ and $ x_m\notin int(L(V,r_1)) $. Hence, $	E\not\subset int(L(V,r_1))$, as desired. Finally, $ L(V,r_1)\subset E$ follows from Lemma~\ref{lem: intersection}.

	  \textbf{Proof of Part~\ref{proof: part4}:} Let $ D $ be the connected component of $ L(P,a) $  such that $ 0\in D $. We will show that \[ E=D .\]
	
	  From Part~\ref{proof: part2} of the proof, we know that $ E $ is connected and $ 0\in E $. Moreover, $ E\subset L(P,a) $ by definition. Therefore, \[E\subset D .\]
	
	   Now, by contradiction,  we will show that $ D\subset E $. Suppose $ D\not\subset E= L(V,r_2) \cap L(P,a) $. Therefore, since $ D\subset L(P,a) $, we conclude that $ D\not\subset L(V,r_2) $. This means that
	   \[
	  \exists x\in D \; \text{ such that }\; x\notin L(V,r_2) .
	   \]
	   Connectedness of $ D $ implies that there exists a continuous function $\psi:[0,1]\rightarrow D $ such that $ \psi(0)=0 $ and $ \psi(1) = x $. Given the facts that $ L(V,r_2) $ is compact, $ 0\in L(V,r_2) $ and $ x\notin L(V,r_2) $, it can be shown that
	   \[
	    \exists t\in[0,1] \;\text{ such that } \;y:=\psi(t)\in\partial L(V,r_2).
	   \]
	    Therefore, $ y\in D $ and $ y\in\partial L(V,r_2) $. Hence,
	   \begin{align}
	   \label{eq: temp7ProofTheo}
	   V(y) &= r_2 \ge r_m + 3|r_m-r_1|,
	   \end{align}
	   and
	   \begin{align}
	   P(y) &\le a = P(x_m)\\
	   &\le V(x_m) + |r_m-r_1|\\
	   \label{eq: temp8ProofTheo}
	   & = r_m + |r_m-r_1|.
	   \end{align}
	   Finally, Eq.~\eqref{eq: temp7ProofTheo} and~\eqref{eq: temp8ProofTheo} imply that $ V(y)-P(y)\ge 2|r_m-r_1|= 2\frac{r_2-r_1}{9}  $, which is a contradiction. Therefore, $ D\subset E $, which implies that $ D = E $, as desired.
\end{proof}

\begin{cor}
$ P $ proves the exponential stability of $ D $ for Eq.~\eqref{eq: diff}, as  in Definition~\ref{def: exponential}.
\end{cor}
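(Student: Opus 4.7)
The plan is to obtain this corollary as an immediate application of Theorem~\ref{theo:Lyapunov} (the forward Lyapunov theorem) to the polynomial Lyapunov function $P$ and the set $D$ produced by Theorem~\ref{thm: main}. In other words, I would observe that Theorem~\ref{thm: main} hands us precisely the hypotheses that Theorem~\ref{theo:Lyapunov} requires, so no new analysis is needed; the corollary is a pure bookkeeping step.

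More concretely, first I would note that $P$ is a polynomial, hence continuously differentiable on all of $\R^n$, which meets the smoothness requirement of Theorem~\ref{theo:Lyapunov}. Second, by the construction in Theorem~\ref{thm: main}, the set $D$ is, by definition, the connected component of the sub-level set $L(P,a)$ containing the origin, which matches the structural requirement on $D$ in Theorem~\ref{theo:Lyapunov}. Third, Theorem~\ref{thm: main} supplies positive scalars $\beta,\gamma,\delta\in\R^+$ together with the two inequalities
\begin{align*}
\beta\|x\|^2 &\le P(x)\le \gamma\|x\|^2,\\
\nabla P(x)^T f(x) &\le -\delta\|x\|^2,
\end{align*}
valid for every $x\in D$, which are exactly the bounds required by Theorem~\ref{theo:Lyapunov}.

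Having checked all three hypotheses, I would simply invoke Theorem~\ref{theo:Lyapunov} with $V:=P$ to conclude that $D$ is an exponentially stable set for Eq.~\eqref{eq: diff} in the sense of Definition~\ref{def: exponential}, which is precisely the content of the corollary. I do not anticipate any genuine obstacle here: the only thing worth double-checking is that the constants and the specific connected component produced in the proof of Theorem~\ref{thm: main} are the same ones fed into Theorem~\ref{theo:Lyapunov}, but this identification is immediate from the statement of the theorem and needs no additional argument.
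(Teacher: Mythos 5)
Your proposal is correct and is exactly the argument the paper intends: the corollary is stated without proof precisely because it follows immediately by applying Theorem~\ref{theo:Lyapunov} to $P$ and the connected component $D$ with the constants $\beta,\gamma,\delta$ supplied by Theorem~\ref{thm: main}. No further comment is needed.
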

		
		Theorem~\ref{thm: main} shows that under Assumptions~\ref{ass: general RA} and~\ref{ass: general LF}, sub-level sets of polynomial Lyapunov functions can inner-approximate the ROA up to any desired accuracy.
		In order to illustrate the practical implication of Theorem~\ref{thm: main}, in the following section we will bring an example of an ODE with  a bounded ROA. For this ODE, we will show that  the sub-level sets of polynomial Lyapunov functions of  degree less than $ d $ can inner-approximate the ROA and these inner-approximations approach the true ROA as $ d $ increases.

	\section{A Proposed Method for Using SOS Programming to Approximate the ROA}
	\label{sec: Numerical Example}
	
	In this section, we propose a Lyapunov based approach to approximating the ROA of  polynomial ODEs based on the use of Sum of Squares (SOS) polynomials. Furthermore, we illustrate  the convergence of the proposed approach to the true ROA as applied to the van-der-Pol oscillator.
	
Consider the ordinary differential equation
	\begin{align}
	\label{eq: numericalExample1}
			\dot{x} = f(x),\quad\,x(0) = x_0,
	\end{align}
	where $ f:\R^n\rightarrow\R^n $ is a polynomial and $ f(0)=0 $. Denote by $ S_f $ the ROA of Eq.~\eqref{eq: numericalExample1} around the equilibrium $ 0 $ and suppose $ S_f $ is  nonempty and bounded. For any $ r\ge0 $, we represent the ball $B_r(0)$ as $\{x:u_r(x)\ge 0\}$ where $ u_r(x):= r^2-\sum_{i=1}^{n} x_i^2 $. If we can find a polynomial Lyapunov function $ P(x) $ and positive scalers $ \beta,\gamma $ and $ \delta $ such that
	\begin{align}
				\beta||x||^2&\le P(x)\le \gamma||x||^2,\label{eq: NE1}\\
				\nabla P(x)^T\,f(x)&\le -\delta ||x||^2,\label{eq: NE2}
				\end{align}
	for all $ x\in B_r(0)=\{x:u_r(x)\ge 0\}$, then for any sub-level set $ L(P,a)$ such that $\LARGE(P,a)\subset B_r(0) $, the connected component containing the origin is an inner-approximation to the ROA. The search for a polynomial Lyapunv function $ P $ satisfying Eq.~\eqref{eq: NE1} and~\eqref{eq: NE2} can be formulated using SOS programming. Specifically, for a fixed degree $d$, we have the polynomial variable $ P (x)$  and SOS polynomial variables $ s_1(x),\dots,s_6(x) $  of degree less than $ 2d $ with the constraint that
	\begin{align}
	P(x)-\beta\sum_{i=1}^{n}x_i^2 = s_1(x) + s_2(x) u_r(x),\\
	-P(x) + \gamma\sum_{i=1}^{n}x_i^2 = s_3(x) + s_4(x) u_r(x),\\
	-\nabla P(x)^Tf(x) - \delta\sum_{i=1}^{n}x_i^2 = s_5(x) + s_6(x) u_r(x).
	\end{align}
This form of SOS programming problem can be solved efficiently using such Matlab toolboxes as SOSTOOLS. For convenience, for given radius $r$ and degree $d$, we refer to this SOS program as $P=H(d,r)$, where $P$ is the feasible polynomial if $H$ is feasible and $P=\emptyset$ otherwise. We now propose the following two-step bisection-based approach to estimating the ROA using $H(d,r)$
\begin{enumerate}
  \item Initialize $r_{\max}$, $r_{\min}$. 
  \item Set $r=\frac{r_{\max}-r_{\min}}{2}$.
  \item If $H(d,r)$ is feasible, set $r_{\min}=r$, otherwise $r_{\max} =r$
  \item Goto step 2.
\end{enumerate}
The estimate of the ROA is then recovered from the last feasible $P=H(d,r)$ using an auxilliary SOS program to find the largest $a(d,r)$ such that $L(H(d,r),a(d,r))\subset B_r(0)$.

Now clearly, for any $d$, a necessary condition for the feasibility of $H(d,r)$ is $ S_f\not\subset B_r (0) $. 
Now define 
\[
r^* = \sup_r  r \text{ such that } S_f\not\subset B_r (0) 
\]
Now, based on the results of this paper and numerical experimentation, we propose the conjecture that as $r \rightarrow r^*$, the polynomial $P$ must approximate some maximal Lyapunov function arbitrarily well in some neighborhood of the boundary of the ROA and this convergence can be extended to the level sets of the Lyapunov function. Furthermore, since the ROA is compact, we conjecture that this approximation can be extended to the entire ROA. Or, in other words
\[
\lim_{\substack{r\rightarrow r^*\\d \rightarrow \infty}} L(H(d,r),a(d,r)) = ROA.
\]
Note that although the results of the paper do not establish this convergence, they are \textit{necessary} for the conjecture to be true.

\subsection{Numerical Illustration}	
In this subsection, we show the apparent convergence of the proposed method.

	\textbf{Example:} 	Consider the Van der Pol oscillator in reverse time defined as
		\begin{align}
		\label{eq: vanderpol}
		\dot{x_1}&= -x_2,\\
		\dot{x_2} &= x_1+x_2(x_1^2-1).\nonumber
		\end{align}
	
    We applied the proposed method for degrees  $ d=4 $, $ 6 $ and $ 8 $.  Figure~\ref{fig} shows the corresponding recovered inner-approximate ROA compared to the true ROA as defined by forward-time numerical integration of Eq.~\eqref{eq: vanderpol}.  In addition, the maximal radius on which $H(d,r)$ is feasible is indicated for $ d=4,\; 6,\;8 $ by the corresponding dashed circle. 

	\begin{figure}[h]\centering
		
		\hspace{0in}\includegraphics[scale=0.4]{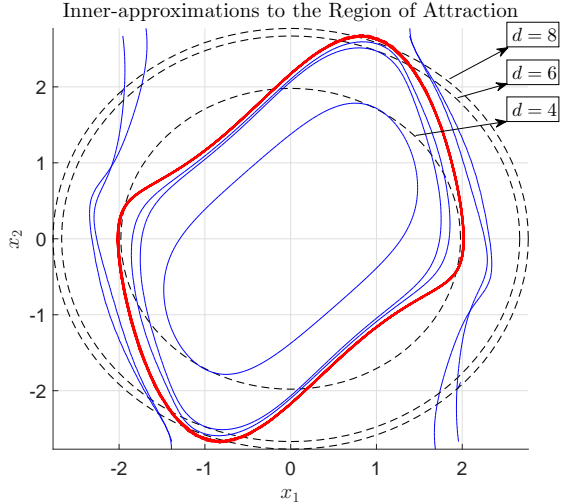}
		\caption{  Inner-approximations to the ROA of Eq.~\eqref{eq: vanderpol} obtained by solving the associated SDP for polynomial variables with degree bound  $ 2d $. }
		\label{fig}
	\end{figure}

	\section{Conclusion}
	\label{sec: Conclusion}
In this paper, we have proposed an SOS-based method for estimating the Region of Attraction of nonlinear ODEs, the conjectured convergence of which relies on the assumption that polynomial Lyapunov functions can estimate the true ROA arbitrarily well. To verify this assumption, we have presented sufficient conditions which guarantee that the ROA can be inner-approximated by the sub-level sets of polynomial Lyapunov Functions arbitrarily well in sense of Hausdorff distance. The main result of the paper as presented in Theorem~\ref{thm: main} is that if the ROA is bounded and there exists an n-times continuously differentiable maximal LF satisfying the conditions in Assumption~\ref{ass: general LF}, then for any scaler $ \epsilon>0 $, there exists a polynomial LF, $ P $, and a sub-level set, $ L(P,a) $, such that if we define $ D $ as the connected component of $ L(P,a) $ that contains the equilibrium, then $ P $ proves exponential stability of the ODE on the set $ D $ and $ D $ is within the Hausdorff $ \epsilon $ distance of the ROA. In order to demonstrate convergence of our proposed approach consistent with the existence results, we applied the methodology to a $ 2- $dimensional polynomial ODE. Work is ongoing to show that as the domain approaches the ROA in the Hausdorff metric, any polynomial Lyapunov function decreasing on that domain \textit{must} provide an asymptotically accurate estimate of the ROA.

\appendix
In this appendix, we provide a proof for Lemma~\ref{lem: pol aprox}, that was used in the proof of Theorem~\ref{thm: main}.

\textit{Lemma~\ref{lem: pol aprox}:}
Suppose $S_f$ is the ROA of Eq.~\eqref{eq: diff} as in Definition~\ref{def: ROA} and $ V:S_f\rightarrow \R $ satisfies the conditions in Assumptions~\ref{ass: general RA} and~\ref{ass: general LF}. Then for any $ \epsilon>0 $ and $c>0 $, there exists a polynomial Lyapunov function $ P $ and positive scalers $ \beta,\gamma$ and $\delta $ s.t.
		\begin{align*}
		\beta||x||^2&\le P(x)\le \gamma||x||^2,\\
		\nabla P(x)^T\,f(x)&\le -\delta ||x||^2,\\
		|{P(x)-V(x)}|&\le \epsilon,
		\end{align*}
		for all $ x\in  L(V,c)$.

\begin{proof}
By assumption, there exist $ \beta_0,\gamma_0,\delta_0>0$ such that
		 \begin{align*}
		 \beta_0||x||^2&\le V(x)\le \gamma_0||x||^2,\\
		 \nabla V(x)^T\,f(x)&\le -\delta_0 ||x||^2,
		 \end{align*}
		 for all $ x\in L(V,c) $.

Now choose $ \beta,\gamma $ and $ \delta $ such that
\[
			\beta<\beta_0,\quad \gamma>\gamma_0,\quad \delta<\delta_0 .
\]
Given these $ \beta,\gamma $ and $ \delta $, we define
		\begin{align*}
				b &:= \underset{x\in B}{\max}||f(x)||_\infty,\\
				d &:= \min \{\beta_0-\beta,\;\gamma-\gamma_0\,,(\delta_0-\delta)\,/nb,\; \epsilon\},
		\end{align*}
By Theorem 8 in~\cite{4908942}, there exists a polynomial $P(x)$ such that
		\begin{align*}
			\left|\frac{P(x)-V(x)}{x^Tx}\right|&\le d,\\
			\left|\frac{\frac{\partial P(x)}{\partial x_i}-\frac{\partial V(x)}{\partial x_i}}{x^Tx}\right|&\le d,\quad\forall i=1,\dots,n,
		\end{align*}
			for all $ x\in  L(V,c)$. Expanding, we have
			\begin{align*}
			P(x) &= V(x) + \frac{P(x)-V(x)}{x^Tx}x^Tx\\
				&\ge (\beta_0-d)x^Tx\ge \beta x^Tx.
			\end{align*}
and
			\begin{align*}
				P(x) &= V(x) + \frac{P(x)-V(x)}{x^Tx}x^Tx\\
					&\le (\gamma_0+d)x^Tx\le \gamma x^Tx.
			\end{align*}
			 Finally:
			 \begin{align*}
			 \nabla P(x)^T\,f(x)& =\nabla V(x)^T\,f(x) + \frac{\nabla (P(x)-V(x))^Tf(x)}{x^Tx}x^Tx\\
			 &\le (-\delta_0+ n\,d\,b)x^Tx \le -\delta x^Tx.
			 \end{align*}
			 Moreover, we have
			\[|P(x)-V(x)|\le|\frac{P(x)-V(x)}{x^Tx}|\le d\le \epsilon,\;\forall x\in  L(v,c) ,\] as desired.
\end{proof}

\begin{mylem}
	
	\label{lem: intersection}
	Let $ A,B\subset\mathbb{R}^n $ be nonempty, connected and compact and $ \partial A \cap \partial B = \emptyset $. Then, one and only one of the followings hold:
	\begin{itemize}
		\item $ A\subset  B^{\mathrm{o}} $,
		\item $ B\subset A^{\mathrm{o}} $,
		\item $ A\cap B =\emptyset $.
	\end{itemize}
\end{mylem}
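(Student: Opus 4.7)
The plan is to split the proof into two parts: (i) verify the three alternatives are mutually exclusive (the ``only one'' part), and (ii) show at least one of them must hold (the ``at least one'' part). Part (i) is brief set-theoretic bookkeeping. Since $A$ and $B$ are nonempty, the containment $A \subset B^{\mathrm{o}}$ forces $A \cap B \supset A \neq \emptyset$, so disjointness fails. If both $A \subset B^{\mathrm{o}}$ and $B \subset A^{\mathrm{o}}$ held, then the chain $A \subset B^{\mathrm{o}} \subset B \subset A^{\mathrm{o}} \subset A$ collapses to $A = B = A^{\mathrm{o}}$, whence $\partial A = \emptyset$. A nonempty compact subset of $\mathbb{R}^n$ with empty boundary is clopen in $\mathbb{R}^n$, which by connectedness of $\mathbb{R}^n$ forces it to equal $\mathbb{R}^n$, contradicting compactness.

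For part (ii), assume $A \cap B \neq \emptyset$ and aim to show $A \subset B^{\mathrm{o}}$ or $B \subset A^{\mathrm{o}}$ by a connectedness argument. The key decomposition is
\[
A = (A \cap B^{\mathrm{o}}) \;\cup\; (A \cap \partial B) \;\cup\; (A \setminus B),
\]
in which the first and third pieces are disjoint and relatively open in $A$ (being intersections of $A$ with the open sets $B^{\mathrm{o}}$ and $B^{c}$). The hypothesis $\partial A \cap \partial B = \emptyset$ is what makes the middle piece tractable: any $x \in A \cap \partial B$ must lie in $A^{\mathrm{o}}$, since otherwise $x$ would belong to both $\partial A$ and $\partial B$. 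Using this, I plan to show that the middle piece fuses into exactly one of the two relatively open parts, yielding a partition of $A$ into two relatively open subsets; connectedness of $A$ then forces one of them to be empty. Since $A \cap B \neq \emptyset$, the surviving part is $A \cap B^{\mathrm{o}}$, and a short refinement using $A \cap \partial B \subset A^{\mathrm{o}} \subset A$ upgrades the conclusion from $A \subset B$ to $A \subset B^{\mathrm{o}}$. If the absorption instead goes the other way, the symmetric argument on $B$ yields $B \subset A^{\mathrm{o}}$.

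The main obstacle is the middle piece $A \cap \partial B$ (and its mirror $B \cap \partial A$). The connectedness argument on $A$ alone works transparently when $A \cap \partial B = \emptyset$; the difficulty is the case where both middle pieces are nonempty simultaneously. I expect to handle this by running the decomposition for $A$ and $B$ in parallel, exploiting the symmetry of the hypothesis together with the fact that the compact sets $\partial A$ and $\partial B$ lie at a positive distance from one another, so that neighborhoods of size smaller than this distance separate the two boundary structures cleanly. Once the pathological case is excluded, the connectedness argument applied to whichever of $A$ or $B$ has an empty middle piece finishes the proof.
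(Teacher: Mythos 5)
Your plan handles the easy halves correctly: mutual exclusivity follows exactly as you argue, and whenever one of the ``middle pieces'' $A \cap \partial B$ or $B \cap \partial A$ is empty, the decomposition of the corresponding set into two disjoint relatively open pieces plus connectedness does yield one of the three alternatives. The genuine gap is precisely the case you defer to the end, where both middle pieces are nonempty. That case cannot be ``excluded,'' because it occurs and the lemma fails there: in $\mathbb{R}^2$ take $A=\{x: 1\le \norm{x}\le 3\}$ (a closed annulus) and $B=\{x:\norm{x}\le 2\}$. Both are nonempty, connected and compact; $\partial A$ is the union of the circles of radii $1$ and $3$ while $\partial B$ is the circle of radius $2$, so $\partial A\cap\partial B=\emptyset$; yet $A\cap B\neq\emptyset$, $A\not\subset B^{\mathrm{o}}$ and $B\not\subset A^{\mathrm{o}}$. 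This also shows concretely why the ``fusing'' step cannot work: here $A\cap\partial B$ is the radius-$2$ circle lying in $A^{\mathrm{o}}$, and it separates $A\cap B^{\mathrm{o}}$ from $A\setminus B$ rather than merging into either, so $A$ is not partitioned into two relatively open pieces. The positive distance between $\partial A$ and $\partial B$ (equal to $1$ in this example) gives no leverage.

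Since the statement as written is false, no completion of your argument is possible without strengthening the hypotheses --- for instance assuming outright that $A\cap\partial B=\emptyset$ or $B\cap\partial A=\emptyset$, or imposing connectedness of the complements. For reference, the paper states this lemma in its appendix without any proof, so there is nothing to compare against; in its application inside Theorem~\ref{thm: main} the two sets are sublevel-set-like regions containing the origin, and it is such additional structural properties, not the stated hypotheses, that a correct argument would have to exploit.
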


\bibliographystyle{ieeetr}
\bibliography{ROA}

\begin{thebibliography}{10}

\bibitem{Chakraborty2011335}
A.~Chakraborty, P.~Seiler, and G.~J. Balas, ``Nonlinear region of attraction
  analysis for flight control verification and validation,'' {\em Control
  Engineering Practice}, vol.~19, no.~4, pp.~335 -- 345, 2011.

\bibitem{6377982}
M.~L. Matthews and C.~M. Williams, ``Region of attraction estimation of
  biological continuous boolean models,'' in {\em Proceedings of the IEEE
  International Conference on Systems, Man, and Cybernetics}, pp.~1700--1705,
  Oct 2012.

\bibitem{6606873}
D.~Henrion and M.~Korda, ``Convex computation of the region of attraction of
  polynomial control systems,'' {\em IEEE Transactions on Automatic Control},
  vol.~59, no.~2, pp.~297--312, 2014.

\bibitem{doi:10.1080/00207179508921938}
E.~Noldus and M.~Loccufier, ``A new trajectory reversing method for the
  estimation of asymptotic stability regions,'' {\em International Journal of
  Control}, vol.~61, no.~4, pp.~917--932, 1995.

\bibitem{6517253}
T.~C. Wang, S.~Lall, and M.~West, ``Polynomial level-set method for polynomial
  system reachable set estimation,'' {\em IEEE Transactions on Automatic
  Control}, vol.~58, no.~10, pp.~2508--2521, 2013.

\bibitem{Hahn19671}
W.~Hahn, {\em The Direct Method of Liapunov}, pp.~93--165.
\newblock Berlin, Heidelberg: Springer Berlin Heidelberg, 1967.

\bibitem{912314}
B.~Tibken, ``Estimation of the domain of attraction for polynomial systems via
  lmis,'' in {\em Proceedings of the IEEE Conference on Decision and Control},
  vol.~4, pp.~3860--3864 vol.4, 2000.

\bibitem{1184414}
A.~Papachristodoulou and S.~Prajna, ``On the construction of lyapunov functions
  using the sum of squares decomposition,'' in {\em Proceedings of the IEEE
  Conference on Decision and Control}, vol.~3, pp.~3482--3487 vol.3, 2002.

\bibitem{sostools}
S.~Prajna, A.~Papachristodoulou, and P.~A. Parrilo, ``Introducing sostools: A
  general purpose sum of squares programming solver,'' in {\em Proceedings of
  the IEEE Conference on Decision and Control}, vol.~1, pp.~741--746, 2002.

\bibitem{1393890}
J.~Lofberg, ``Yalmip : a toolbox for modeling and optimization in matlab,'' in
  {\em Proceedings of the IEEE International Conference on Robotics and
  Automation}, pp.~284--289, 2004.

\bibitem{Kamyar20152383}
R.~Kamyar and M.~M. Peet, ``Polynomial optimization with applications to
  stability analysis and control - alternatives to sum of squares,'' {\em
  Discrete and Continuous Dynamical Systems - Series B}, vol.~20, no.~8,
  pp.~2383--2417, 2015.

\bibitem{4471858}
W.~Tan and A.~Packard, ``Stability region analysis using polynomial and
  composite polynomial lyapunov functions and sum-of-squares programming,''
  {\em IEEE Transactions on Automatic Control}, vol.~53, no.~2, pp.~565--571,
  2008.

\bibitem{Topcu20082669}
U.~Topcu, A.~Packard, and P.~Seiler, ``Local stability analysis using
  simulations and sum-of-squares programming,'' {\em Automatica}, vol.~44,
  no.~10, pp.~2669 -- 2675, 2008.

\bibitem{6031233}
F.~Hamidi, H.~Jerbi, W.~Aggoune, M.~Djemaï, and M.~N. Abdkrim, ``Enlarging
  region of attraction via lmi-based approach and genetic algorithm,'' in {\em
  Proceedings of the international Conference on Communications, Computing and
  Control Applications}, pp.~1--6, 2011.

\bibitem{6878443}
Y.~J. Chen, M.~Tanaka, K.~Tanaka, and H.~O. Wang, ``Stability analysis and
  region-of-attraction estimation using piecewise polynomial lyapunov
  functions: Polynomial fuzzy model approach,'' {\em IEEE Transactions on Fuzzy
  Systems}, vol.~23, no.~4, pp.~1314--1322, 2015.

\bibitem{Zec2010445}
A.~I. Zečević and D.~D. Šiljak, ``Estimating the region of attraction for
  large-scale systems with uncertainties,'' {\em Automatica}, vol.~46, no.~2,
  pp.~445 -- 451, 2010.

\bibitem{5337881}
U.~Topcu, A.~K. Packard, P.~Seiler, and G.~J. Balas, ``Robust
  region-of-attraction estimation,'' {\em IEEE Transactions on Automatic
  Control}, vol.~55, no.~1, pp.~137--142, 2010.

\bibitem{Hahn19672}
W.~Hahn, {\em The Converse of the Stability Theorems}, pp.~225--256.
\newblock Berlin, Heidelberg: Springer Berlin Heidelberg, 1967.

\bibitem{VANNELLI198569}
A.~Vannelli and M.~Vidyasagar, ``Maximal lyapunov functions and domains of
  attraction for autonomous nonlinear systems,'' {\em Automatica}, vol.~21,
  no.~1, pp.~69 -- 80, 1985.

\bibitem{6161499}
A.~A. Ahmadi, M.~Krstic, and P.~A. Parrilo, ``A globally asymptotically stable
  polynomial vector field with no polynomial lyapunov function,'' in {\em
  Proceedings of the IEEE Conference on Decision and Control and European
  Control Conference}, pp.~7579--7580, 2011.

\bibitem{4908942}
M.~M. Peet, ``Exponentially stable nonlinear systems have polynomial lyapunov
  functions on bounded regions,'' {\em IEEE Transactions on Automatic Control},
  vol.~54, no.~5, pp.~979--987, 2009.

\end{thebibliography}
\end{document}